\newtheorem{Theorem}{Theorem}[section]
\newtheorem{thm}{Theorem}[section]
\newtheorem{Corollary}[thm]{Corollary}
\newtheorem{Lemma}[thm]{Lemma}
\newtheorem{Conjecture}[thm]{Conjecture}
\theoremstyle{definition}
\theoremstyle{remark}
\title{Minimal Nilpotent Orbits of type $D$ and $E$}
\author{Boming Jia\thanks{\vspace{-0.1em}The author was supported by NSFC Grant No. 12225108 and the Shuimu Scholar Program in Tsinghua University.}}
\date{}
\newcommand{\Wedge}{\scalebox{0.8}{\raisebox{0.4ex}{$\bigwedge$}}}
\newcommand{\slc}{\mathfrak{sl}_3}
\newcommand{\sld}{\mathfrak{sl}_4}
\newcommand{\sln}{\mathfrak{sl}_n}
\newcommand{\so}{\mathfrak{so}}
\newcommand{\Lieg}{\mathfrak{g}}
\newcommand{\Liep}{\mathfrak{p}}
\newcommand{\suchthat}{\,|\,}
\newcommand{\Sym}{\mathrm{Sym}}
\DeclareMathOperator{\Hom}{Hom}
\DeclareMathOperator{\Spec}{Spec}
\newcommand{\C}{\mathbb{C}}
\newcommand{\Omin}{\mathcal{O}_{\textrm{min}}}
\newcommand{\Ominbar}{\overline{\mathcal{O}}_\textrm{min}}
\begin{document}

\setlength{\droptitle}{-6em}	

\maketitle

\vspace{-4em}

\begin{abstract}
We first show the closure of the minimal nilpotent adjoint orbit $\Omin^{D_n}$ in $\so_{2n}$ is isomorphic to the affinization of $T^*(SL_{n-1}/[P,P])$ where $P$ is the parabolic subgroup $P_{(1,1,n-3)}$ of $SL_{n-1}(\C)$. Then we prove that the closure of the minimal nilpotent adjoint orbit $\Omin^{E_6}$ of the complex simple Lie algebra $\mathfrak{e}_6$ is isomorphic to the affinization of $T^*(SL_4/P^u)$ where $P^u$ is the unipotent radical of the parabolic subgroup $P_{(2,2)}$ of $SL_4(\C)$. In the end we will formulate a similar result for type $E_7$.
\end{abstract}

\section{Introduction}
Minimal nilpotent adjoint orbits of an complex simple Lie algebra are ubiquitous objects in the study of geometric representation theory. They have natural exact symplectic forms and their closure $\Ominbar$ are often related to $3d
\ \mathcal{N}=4$ Coulomb branches. In type $D_4$, there is an identification of $\Omin$ with the affinization of the total space of the cotangent bundle of $SL_3/U$: 
\begin{equation}\label{D4}
    \Ominbar^{D_4}\cong T^*(SL_3/U)^{\mathrm{aff}}.
\end{equation}
For a long time, we believe that the Hamiltonian reduction construction of $\Ominbar$ (In physics terms, as Higgs branch) is essential for the proof of such an identification. But now we have realized that there is a generalization of the embeddings of Lie algebras
$$
\slc\hookrightarrow\sld\hookrightarrow\so_8,
$$
which allows us to to formulate a deformed version of (\ref{D4}) by applying the main results in the paper \cite{LSS88}.
In this paper we first review of well-known generalization of the above result in type $D_n$. Then we also prove an analogous result in type $E_6$ and $E_7$. More specifically, we prove that the closure of the minimal nilpotent adjoint orbit $\Omin^{E_6}$ of the complex simple Lie algebra $\mathfrak{e}_6$ is isomorphic to the affinization of $T^*(SL_4/P^u)$ where $P^u$ is the unipotent radical of the parabolic subgroup $P_{(2,2)}$ of $SL_4(\C)$. It should also be noted that Tom Gannon and Ben Webster have informed the author that they have a different proof and a generalization \cite{GW25} of the above result for type $E_6$ .

\textbf{Acknowledgments.} 
The author would like to deeply thank Shizhuo Yu for hosting him at the conference ``Poisson Geometry and Cluster Algebras" in Nankai University. The author would also like to thank Sam Evens, Baohua Fu, Victor Ginzburg, Yu Li, Jie Liu, Zihang Liu, Peng Shan, Wenbin Yan for their useful suggestions and comments.

\section{The minimal nilpotent orbit in type $D_n$}
 Let $n\geq4$. Fix the Eulidean inner product $(\ ,\ )$ on $\C^{2n}$ given by 
 $$
 (\mathbf{x},\mathbf{y})=\sum_{k=1}^n x_k\, y_{2n+1-k}.
 $$ 
 Then under the identification of $\so(2n,\C)=\Wedge^2\C^{2n}$, the minimal nilpotent adjoint orbit $\Omin^{D_n}$ is identified with the $SO_{2n}$-orbit of $e_1\wedge e_2$, so 
\[
    \Ominbar^{D_n}=\{\alpha\in\Wedge^2\C^{2n}\suchthat \alpha\textrm{ is decomposable and isotropic}\}
\]
as explained in Proposition 3.5 in \cite{Jia21}. 

 Let $P$ be the parabolic subgroup of $SL_{n-1}$ corresponding to the partition $(1,1,n-3).$
Let $[P, P]$ denote the commutator subgroup of $P$. Then by \cite{DKS13}
that the affinization of $T^{*}(SL_{n-1}/[P, P])^{\mathrm{aff}}$ is isomorphic to the Hamiltonian reduction
$$
    T^*(\Hom(\C,\C^2)\oplus\Hom(\C^2,\C^{n-1}))/\!/\!/SL_2\coloneqq \mu^{-1}_{SL_2}(0)/\!/SL_2.
$$
Now by applying the same argument as in the proof of Proposition 3.6 in \cite{Jia21} we obtain
\begin{thm}\label{Dmain}
    The affinization $T^*(SL_{n-1}/[P,P])^{\mathrm{aff}}$ is isomorphic to the closure $ \Ominbar^{D_n}$ of the minimal nilpotent adjoint orbit in the Lie algebra $\so_{2n}$.
\end{thm}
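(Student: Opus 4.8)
The plan is to combine the presentation of $T^*(SL_{n-1}/[P,P])^{\mathrm{aff}}$ as a Hamiltonian reduction, recalled above from \cite{DKS13}, with the argument of \cite[Prop.~3.6]{Jia21} (which is the case $n=4$): it suffices to produce an isomorphism
\[
X\;:=\;\mu^{-1}_{SL_2}(0)/\!/SL_2\;\longrightarrow\;\Ominbar^{D_n}.
\]
Write a point of $T^*\big(\Hom(\C,\C^2)\oplus\Hom(\C^2,\C^{n-1})\big)$ as $(a,b,a^*,b^*)$ with $a\in\C^2$, $b\in\Hom(\C^2,\C^{n-1})$, $a^*\in(\C^2)^*$ and $b^*\in\Hom(\C^{n-1},\C^2)$, with $SL_2$ acting through its standard action on $\C^2$; the moment map is the traceless part $(aa^*-b^*b)_0$, so that $\mu^{-1}_{SL_2}(0)=\{\,aa^*-b^*b\in\C\cdot\Id_{\C^2}\,\}$.

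Generalising the chain $\slc\hookrightarrow\sld\hookrightarrow\so_8$ to $\slnb\hookrightarrow\sln\hookrightarrow\so_{2n}$, I would split $\C^{2n}=W\oplus W^*\oplus\C^2$ with $W=\C^{n-1}$, the split quadratic form fixed above pairing $W$ with $W^*$ and restricting nondegenerately to the last $\C^2$, so that $SL_{n-1}$ acts on $W\oplus W^*$ and trivially on $\C^2$. Let $\omega$ be the $SL_2$-invariant symplectic form on the abstract $\C^2$. To $(a,b,a^*,b^*)$ attach the linear map $\psi\colon\C^2\to W\oplus W^*\oplus\C^2$,
\[
\psi(x)\;=\;\big(\,bx,\ \ [w\mapsto\omega(x,b^*w)],\ \ a^*x,\ \ \omega(a,x)\,\big),
\]
which one checks to be $SL_2$-equivariant, in the sense that a $g$-translate of $(a,b,a^*,b^*)$ replaces $\psi$ by $\psi\circ g^{-1}$. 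Hence $\Phi(a,b,a^*,b^*):=\psi(e_1)\wedge\psi(e_2)\in\Wedge^2\C^{2n}$ is $SL_2$-invariant and manifestly decomposable, and descends to a morphism $\bar\Phi\colon X\to\Wedge^2\C^{2n}$. The crucial point is an elementary identity, obtained by decomposing the bilinear form along $W\oplus W^*$ and $\C^2$,
\[
\big(\psi(x),\psi(y)\big)_{\C^{2n}}\;=\;2\,\omega\big((aa^*-b^*b)_0\,x,\ y\big)\qquad(x,y\in\C^2),
\]
which shows that on $\mu^{-1}_{SL_2}(0)$ the $2$-plane $\im\psi$ is totally isotropic; by the description of $\Ominbar^{D_n}$ given above, $\bar\Phi$ therefore maps $X$ into $\Ominbar^{D_n}$.

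It remains to check that $\bar\Phi\colon X\to\Ominbar^{D_n}$ is bijective; granting this, since $\Ominbar^{D_n}$ is normal (normality of minimal nilpotent orbit closures being classical) and we work in characteristic $0$, Zariski's main theorem forces $\bar\Phi$ to be an isomorphism. For surjectivity: a nonzero decomposable isotropic $2$-vector is $v\wedge w$ with $\Pi=\langle v,w\rangle$ a totally isotropic plane; for $\Pi$ in general position $\Pi$ is the graph of a linear map $\C^2\to W\oplus W^*$, and solving for $(a,b,a^*,b^*)\in\mu^{-1}_{SL_2}(0)$ realising $v\wedge w$ as $\psi(e_1)\wedge\psi(e_2)$ is then a short linear-algebra computation; the remaining $v\wedge w$ lie in the $SO_{2n}$-orbit of those already obtained, and $0=\bar\Phi([0])$. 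For injectivity: over a general $v\wedge w$, prescribing $\psi(e_1)$ and $\psi(e_2)$ determines $(a,b,a^*,b^*)$, while the pairs $(\psi(e_1),\psi(e_2))$ with product $v\wedge w$ form a single $SL_2$-orbit, so the fibre of $\Phi$ is one $SL_2$-orbit; one checks likewise that the fibre of $\bar\Phi$ over $0$ reduces to the single point $[0]$. This is precisely the computation carried out in \cite[Prop.~3.6]{Jia21}.

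The only genuine issue is to confirm that the argument is uniform in $n$, and it is: the reduced group is $SL_2$ for every $n\ge4$, the ambient space is $\Wedge^2(W\oplus W^*\oplus\C^2)$ with only $\dim W=n-1$ varying, and $\Ominbar^{D_n}$ admits the same ``decomposable and isotropic'' description for all $n\ge4$, so the moment-map identity and the bijectivity check are the $n=4$ computations of \cite{Jia21} with $\C^3$ replaced by $\C^{n-1}$. In particular the distinction between $n=4$, where $[P,P]=U$, and $n\ge5$, where $[P,P]$ picks up the semisimple factor $SL_{n-3}$, never surfaces on the reduction side, being absorbed into $\Hom(\C^2,\C^{n-1})$. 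I expect this uniformity to be the only point requiring real care; everything else is bookkeeping.
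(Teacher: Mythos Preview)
Your proposal is correct and takes essentially the same approach as the paper: the paper's proof of this theorem is the one-line assertion that the argument of \cite[Prop.~3.6]{Jia21} carries over verbatim, and you have written out precisely that generalisation, with $\C^3$ replaced by $\C^{n-1}$ and the same explicit $SL_2$-invariant map into decomposable isotropic bivectors followed by a bijectivity check and an appeal to normality. (The paper afterwards supplies a second, independent explanation via the Joseph ideal and \cite{LSS88}, taking associated graded of the isomorphism $\mathcal{U}(\so_{2n})/J_0\cong\mathcal{D}(X)$; you do not pursue this, but it is offered there as an alternative perspective rather than the primary argument.)
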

The rest of the this section is devoted to another explanation of this theorem. And this new perspective allows us to generalize the theorem to other types.
Fix the standard basis $e_1,\cdots,e_n\in\C^n$. 
Then the homogeneous space $SL_{n-1}/[P,P]$ is quasi-affine with affine closure $\overline{SL_{n-1}/[P,P]}=\Spec(\C[SL_{n-1}]^{[P,P]})$ and we have an embedding 
$$
    \iota: \overline{SL_{n-1}/[P,P]}\hookrightarrow \Wedge^2\C^{n-1}\oplus\C^{n-1}
$$
given by 
\begin{align*}
    \C[\Wedge^2\C^{n-1}\oplus\C^{n-1}]& \longrightarrow\C[SL_{n-1}]^{[P,P]}\\
    f=f_1\otimes f_2\quad& \longmapsto\big(g\mapsto f_1(ge_1\wedge ge_2)f_2(ge_1)\big)
\end{align*}
Using the Bourbaki labeling for Dynkin diagrams we fix embeddings of Lie algebras
$$
\mathfrak{sl}_{n-1}\xhookrightarrow{\varphi_1}\sln\xhookrightarrow{\varphi_2}\so_{2n}
$$
such that under the pullback maps
\begin{align*}
    & (\varphi_2|_{\mathfrak{h}^{A_{n-1}}})^*:
    \alpha^{D_n}_{i}\mapsto \alpha^{A_{n-1}}_{i}\in(\mathfrak{h}^{A_{n-1}})^*,\textrm{ for all }i\leq n-1.\\
      & (\varphi_1|_{\mathfrak{h}^{A_{n-2}}})^*:
    \alpha^{A_{n-1}}_{i}\mapsto \alpha^{A_{n-2}}_{i}\in(\mathfrak{h}^{A_{n-1}})^*,\textrm{ for all }i\leq n-2.
\end{align*}

Let $\mathfrak{r}$ be the maximal abelian ideal of $\so(2n)$ given by
$$
\mathfrak{r}=\bigoplus_{\alpha\geq\alpha_n}\Lieg_\alpha,
$$
Then $\mathfrak{r}$ is the nilpotent radical of a parabolic subalgebra $\Liep_n\subset D_n$ whose levi subalgebra has semisimple part precisely the image $\varphi_2(\mathfrak{sl}_{n-1})$.

\begin{Lemma}
        The abelian ideal $\mathfrak{r}$ is the second fundamental representation $\Wedge^{2}\C^n$ of $\varphi_2(\mathfrak{sl}_n)$ and decomposes as irreducible $\varphi_2(\varphi_1(\mathfrak{sl}_{n-1}))$-representations into
    \begin{equation}\label{rdecomp}
               \mathfrak{r}=\C^{n-1}\oplus\Wedge^2\C^{n-1}
    \end{equation}
    such that the $\Ominbar^{D_n}\cap\mathfrak{r}=\iota\big( \overline{SL_{n-1}/[P,P]}\big)$
\end{Lemma}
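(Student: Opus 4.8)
The plan is to check the three assertions in turn: the first two are structure theory for the cominuscule parabolic $\Liep_n$, and only the third involves the geometry of $\Ominbar^{D_n}$. Since $\Liep_n$ is obtained from $D_n$ by deleting the node $\alpha_n$, the excerpt already records that the semisimple part of its Levi is $\varphi_2(\mathfrak{sl}_n)$ and that $\mathfrak{r}=\bigoplus_{\alpha\geq\alpha_n}\Lieg_\alpha$. Writing the positive roots of $D_n$ as $\epsilon_i\pm\epsilon_j$, the roots $\alpha\geq\alpha_n=\epsilon_{n-1}+\epsilon_n$ are exactly $\{\epsilon_i+\epsilon_j:1\leq i<j\leq n\}$, so under $\so_{2n}=\Wedge^2\C^{2n}$ we get $\mathfrak{r}=\Wedge^2 E$ with $E=\cspan(e_1,\dots,e_n)$, the maximal isotropic subspace on which $\varphi_2(\mathfrak{sl}_n)$ acts with weights $\epsilon_1,\dots,\epsilon_n$, i.e. as the standard module $\C^n$; this gives the first assertion. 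For the second, the normalization of $\varphi_1$ forces $\varphi_2(\varphi_1(\mathfrak{sl}_{n-1}))$ to fix $e_n$ and to act on $\cspan(e_1,\dots,e_{n-1})$ as the standard module $\C^{n-1}$, whence
\[
\mathfrak{r}=\Wedge^2 E=\Wedge^2\C^{n-1}\ \oplus\ \big(\C^{n-1}\wedge e_n\big)\ \cong\ \Wedge^2\C^{n-1}\oplus\C^{n-1}
\]
as $\varphi_2(\varphi_1(\mathfrak{sl}_{n-1}))$-modules, and this is exactly the codomain of $\iota$ via $\beta+v\wedge e_n\mapsto(\beta,v)$.

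For the third assertion I would first observe that, $E$ being totally isotropic, every decomposable element of $\Wedge^2 E$ spans an isotropic $2$-plane, so the recalled description of $\Ominbar^{D_n}$ gives $\Ominbar^{D_n}\cap\mathfrak{r}=\{\gamma\in\Wedge^2 E:\gamma\text{ decomposable}\}$. Writing $\gamma=\beta+v\wedge e_n$ with $\beta\in\Wedge^2\C^{n-1}$, $v\in\C^{n-1}$, the Plücker relation $\gamma\wedge\gamma=0$ becomes $\beta\wedge\beta+2(\beta\wedge v)\wedge e_n=0$, and since the two terms lie in the complementary subspaces $\Wedge^4\C^{n-1}$ and $\Wedge^3\C^{n-1}\wedge e_n$, decomposability of $\gamma$ is equivalent to $\beta\wedge\beta=0$ and $\beta\wedge v=0$. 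On the other side, $\iota$ is a closed embedding, so $\iota(\overline{SL_{n-1}/[P,P]})=\overline{\{(ge_1\wedge ge_2,ge_1):g\in SL_{n-1}\}}$; since $n-1\geq3$, the group $SL_{n-1}$ is transitive on ordered pairs of linearly independent vectors of $\C^{n-1}$, so this point set equals $\{(v\wedge w,v):v,w\text{ linearly independent}\}=\{(\beta,v):v\neq0,\ v\wedge\beta=0\}$ (for $v\neq0$, $v\wedge\beta=0$ in $\Wedge^3\C^{n-1}$ iff $\beta\in v\wedge\C^{n-1}$). Taking Zariski closure adjoins exactly the locus $v=0$ with $\beta$ decomposable — these limits come from one-parameter degenerations such as $g_te_1=tv$, $g_te_2=t^{-1}w$ — so $\iota(\overline{SL_{n-1}/[P,P]})=\{(\beta,v):\beta\wedge\beta=0,\ v\wedge\beta=0\}$, which is the set $\Ominbar^{D_n}\cap\mathfrak{r}$ just computed.

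The Plücker and closure computations are routine; the step needing real care is keeping the several identifications compatible — the $\varphi_2(\varphi_1(\mathfrak{sl}_{n-1}))$-module structure on $\mathfrak{r}$ coming from the two normalized embeddings, the splitting $\Wedge^2 E=\Wedge^2\C^{n-1}\oplus(\C^{n-1}\wedge e_n)$, and the coordinates on the codomain of $\iota$ — so that $\iota(g[P,P])=(ge_1\wedge ge_2,ge_1)$ corresponds to the pure wedge $ge_1\wedge(ge_2+e_n)\in\Wedge^2 E$. Once this is pinned down, the inclusion $\iota(\overline{SL_{n-1}/[P,P]})\subseteq\Ominbar^{D_n}$ is immediate (every such point is a pure wedge of vectors in $E$, hence decomposable and isotropic), and the content of the lemma is the reverse inclusion supplied by the transitivity and closure argument above.
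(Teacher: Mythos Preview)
Your argument is correct, and for the first two assertions it is essentially the same as the paper's: both identify $\mathfrak{r}$ with $\Wedge^2\C^n$ via $\so_{2n}=\Wedge^2\C^{2n}$ and then split off $e_n$. The third assertion is where you genuinely diverge. The paper argues as follows: it exhibits the element $e_1\wedge e_2+e_1\wedge e_n$ as an explicit $\mathrm{Ad}$-conjugate of the highest root vector $X_\theta$, observes that its $\varphi_2(\varphi_1(SL_{n-1}))$-orbit therefore lies in the orbital variety $\Ominbar^{D_n}\cap\mathfrak{r}=\overline{\mathrm{Ad}(\varphi_2(SL_n))X_\theta}$, and then concludes by noting that this orbital variety is irreducible of dimension $2n-3=\dim(SL_{n-1}/[P,P])$. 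You instead compute both sides as closed subsets cut out by the Pl\"ucker-type equations $\beta\wedge\beta=0$, $\beta\wedge v=0$. Your route is more elementary and self-contained --- it avoids invoking the irreducibility and dimension of the orbital variety --- but it leans on the concrete description of $\Ominbar^{D_n}$ as decomposable isotropic bivectors, which has no direct analogue for the $E_6$ and $E_7$ cases treated later in the paper; the paper's irreducibility-plus-dimension template is what gets reused there. One small slip: the orbit $\{(ge_1\wedge ge_2,\,ge_1):g\in SL_{n-1}\}$ is the locus $\{(\beta,v):v\neq0,\ \beta\neq0,\ v\wedge\beta=0\}$, not $\{(\beta,v):v\neq0,\ v\wedge\beta=0\}$ as you wrote (the latter also contains $(0,v)$); this does not affect the closure computation, which is correct as stated.
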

\begin{proof}
Identify $\so_{2n}$ with $\Wedge^2(\C^{2n})$, then the highest root vector $X_\theta$ is identified with $e_1\wedge e_2$. Use the inner product $(\,,\,)$ to identify $\C^{2n}$ with $\C^n\oplus (\C^n)^*$. Then we decompose 
$$
    \Wedge^2(\C^n\oplus (\C^n)^*)=\Wedge^2(\C^n)\oplus(\sln\oplus\C)\oplus \Wedge^2(\C^n)^*
$$
as representations of $\varphi_2(\sln)$ and the abelian ideal $\mathfrak{r}$ is identified with $\Wedge^2(\C^n)$. Now we can identify $\C^n=\C^{n-1}\oplus\C$ and further decompose 
$$\Wedge^2(\C^n)=\Wedge^2(\C^{n-1}\oplus\C)=\Wedge^2\C^{n-1}\oplus\C^{n-1}$$
as representations of $\varphi_2(\varphi_1(\mathfrak{sl}_{n-1}))$.
Now for simplicity, we use $\alpha_i$ to denote the simple root $\alpha_i^{D_n}$. 
Then the minimal nilpotent orbit $\Omin=Ad({SO_{2n}})X_\theta$, and $\Ominbar=\Omin\cup\{0\}$.
    Notice that under the identification (\ref{rdecomp}) 
    $$
        Ad\big(\exp(Y_{-(\alpha_2+\alpha_3+\cdots+\alpha_{n-1})})\big)X_\theta=e_1\wedge e_2+e_1\wedge e_n
    $$
    Since the $\varphi_2(\varphi_1(SL_{n-1}))$-orbit of $e_1\wedge e_2+e_1\wedge e_n$ in $\Wedge^2\C^{n-1}\oplus\C^{n-1}$ is contained in the smooth part of the orbital variety $\Ominbar^{D_n}\cap\mathfrak{r}=\overline{Ad (\varphi_2(SL_{n}))X_\theta}$, which is irreducible and 
    $$
        \dim \Ominbar^{D_n}\cap\mathfrak{r}=\dim(SL_{n-1}/[P,P])=2n-3,
    $$
so $\iota\big( \overline{SL_{n-1}/[P,P]}\big)=\Ominbar^{D_n}\cap\mathfrak{r}$.
\end{proof}
Now we are ready to present
\begin{proof}[Another explanation of Theorem \ref{Dmain}]
    Let $X\coloneqq Ad (\varphi_2(SL_n))X_\theta$.
    Let $J_0$ be the Joseph ideal \cite{Jos76} of $\mathcal{U}(\mathfrak{so}_{2n})$. By Theorem 5.2 and Corollary 5.3.A in \cite{LSS88} we have an isomorphism from $\mathcal{U}(\mathfrak{so}_{2n})/J_0$ to the ring of algebraic differential operators on $X$. Then we may take associated graded on both sides to get our result. (A careful chosen of Fourier transforms are required in order to get an embedding of $\mathfrak{so}_{2n}$ into the derivations in the ring of differential operators on $X$.) 
\end{proof}

\section{The minimal nilpotent orbit in type $E_6$}
All results in this section are motivated and inspired by statements in the paper \cite{LSS88}.
Let $P^u$ be the unipotent radical of the parabolic subgroup $P_{(2,2)}$ of $SL_4(\C)$. That is
$$
    P^u=\left\{\begin{pmatrix}
        1 && 0 && * && *\\
        0 && 1 && * && *\\
        0 && 0 && 1 && 0\\
        0 && 0 && 0 && 1
    \end{pmatrix}\right\}.
$$
Fix the standard basis $e_1,\cdots,e_4\in\C^4$. Let $e_1^*,\cdots,e_4^*$ denote the corresponding dual basis of $(\C^4)^*$
Then the homogeneous space $SL_4/P^u$ is quasi-affine with affine closure $\overline{SL_4/P^u}=\Spec(\C[SL_4]^{P^u})$ and we have an embedding 
$$
    \iota: \overline{SL_4/P^u}\hookrightarrow \C^4\oplus (\C^4)^*\oplus (\C^4)^* \oplus \C^4
$$
given by 
\begin{align*}
    \C[\C^4\oplus (\C^4)^*\oplus (\C^4)^* \oplus \C^4]& \longrightarrow\C[SL_4]^{P^u}\\
    f=f_1\otimes f_2\otimes f_3\otimes f_4\quad& \longmapsto\big(g\mapsto f_1(ge_1)f_2(e_4^*g^{-1})f_3(e_3^*g^{-1})f_4(ge_2)\big)
\end{align*}
Using the Bourbaki labeling for Dynkin diagrams
$$
\dynkin[text style/.style={scale=1},label,label macro/.code={\alpha_{\drlap{#1}}^{A_3}},edge length=.75cm,scale=1.2,edge
length=1.2cm]A3\quad
\dynkin[text style/.style={scale=1},label,label macro/.code={\alpha_{\drlap{#1}}^{D_5}},edge length=.75cm,scale=1.2,edge
length=1.2cm]D5\quad
\dynkin[text style/.style={scale=1},label,label macro/.code={\alpha_{\drlap{#1}}^{E_6}},edge length=.75cm,scale=1.2,edge
length=1.2cm]E6,
$$
we fix embeddings of Lie algebras
$$
\mathfrak{sl}_4\xhookrightarrow{\varphi_1} \so_{10}\xhookrightarrow{\varphi_2} \mathfrak{e}_6
$$
such that under the pullback maps
\begin{align*}
    & (\varphi_2|_{\mathfrak{h}^{D_5}})^*:
    \alpha^{E_6}_{1}\mapsto 0\in(\mathfrak{h}^{D5})^*,\textrm{ and }
    \alpha^{E_6}_{i}\mapsto\alpha^{D_5}_{7-i}\textrm{ for }i\geq2.\\
     & (\varphi_1|_{\mathfrak{h}^{A_3}})^*:
        \alpha^{D_5}_{2}\mapsto \alpha^{A_3}_{1},
        \alpha^{D_5}_{4}\mapsto \alpha^{A_3}_{2},
        \alpha^{D_5}_{3}\mapsto \alpha^{A_3}_{3},        
        \text{ and other $\alpha^{D5}_{j}\mapsto 0\in(\mathfrak{h}^{A_3})^*$}.
\end{align*}
Let $\mathfrak{i}$ be the maximal abelian ideal of $E_6$ given by
$$
    \mathfrak{i}=\bigoplus_{\alpha\geq\alpha_1}\Lieg_\alpha,
$$
Then $\mathfrak{i}$ is the nilpotent radical of a parabolic subalgebra $\Liep_1\subset E_6$ whose levi subalgebra has semisimple part precisely the image $\varphi_2(\so_{10})$.
\begin{Lemma}
        The abelian ideal $\mathfrak{i}$ is the spin representation $\Wedge^{odd}\C^5$ of $\varphi_2(\so_{10})$ and decomposes as irreducible $\varphi_2(\varphi_1(\mathfrak{sl}_4))$-representations into
    \begin{equation}\label{idecomp}
               \mathfrak{i}=\C^4\oplus (\C^4)^*\oplus (\C^4)^* \oplus \C^4
    \end{equation}
    such that the $\Ominbar^{E_6}\cap\mathfrak{i}=\iota\big( \overline{SL_4/P^u}\big)$
\end{Lemma}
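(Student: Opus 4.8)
The plan is to follow, point for point, the proof of the preceding lemma; there are three assertions to check: (a) $\mathfrak{i}\cong\Wedge^{odd}\C^5$ as a $\varphi_2(\so_{10})$-module; (b) the decomposition (\ref{idecomp}) as a $\varphi_2(\varphi_1(\mathfrak{sl}_4))$-module; and (c) the equality $\Ominbar^{E_6}\cap\mathfrak{i}=\iota(\overline{SL_4/P^u})$. For (a): the coefficient of $\alpha_1^{E_6}$ in the highest root $\theta=\alpha_1+2\alpha_2+2\alpha_3+3\alpha_4+2\alpha_5+\alpha_6$ is $1$, so every root of $\mathfrak{i}$ has $\alpha_1$-coefficient exactly $1$, confirming $[\mathfrak{i},\mathfrak{i}]=0$, and $\dim\mathfrak{i}=|\Phi^+(E_6)|-|\Phi^+(D_5)|=36-20=16$. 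The Levi acts irreducibly on $\mathfrak{i}$ (as the $\mathfrak{l}$-module generated by $\Lieg_\theta$), with lowest weight $\alpha_1^{E_6}$; this weight pairs as $-1$ with the coroot of the $D_5$-node adjacent to node $1$ of $E_6$ and as $0$ with the remaining $D_5$-coroots, and that node is one of the two fork-tips of the $D_5$ diagram. Hence $\mathfrak{i}$ is a $16$-dimensional minuscule $\so_{10}$-module, i.e. a half-spin representation; one checks it is the $\Wedge^{odd}\C^5$ one (the opposite parabolic $\Liep_6$ would produce $\Wedge^{even}\C^5$).

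For (b): the prescribed action of $\varphi_1$ on Cartan subalgebras identifies $\varphi_1(\mathfrak{sl}_4)$ with the regular subalgebra $\so_6\subset\so_{10}$ whose centralizer is $\so_4$ — the one obtained by deleting a node from the affine Dynkin diagram of $D_5$, equivalently from the orthogonal splitting $\C^{10}=\C^6\oplus\C^4$. The half-spin module restricts by the standard branching $S^+_{10}|_{\so_6\times\so_4}\cong(S^+_6\boxtimes S^+_4)\oplus(S^-_6\boxtimes S^-_4)$, and under the exceptional isomorphisms $\so_6\cong\mathfrak{sl}_4$ (with $S^{\pm}_6$ the vector representation $\C^4$ and its dual) and $\so_4\cong\mathfrak{sl}_2\times\mathfrak{sl}_2$ (with $S^{\pm}_4$ the two $2$-dimensional spin representations), restricting further to $\varphi_1(\mathfrak{sl}_4)$ yields $\C^4\oplus\C^4\oplus(\C^4)^*\oplus(\C^4)^*$; matching weights against the $E_6$-root data pins down the ordering and the duality pattern as in (\ref{idecomp}).

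For (c): write $X_\theta$ for a highest root vector of $\mathfrak{e}_6$, so $\Omin^{E_6}$ is its adjoint orbit and $\Ominbar^{E_6}=\Omin^{E_6}\cup\{0\}$. Since $\mathfrak{i}$ is the abelian nilradical of the maximal parabolic $\Liep_1$ and $X_\theta$ is a highest weight vector for the Levi $L$ on $\mathfrak{i}$, the orbital variety $\Ominbar^{E_6}\cap\mathfrak{i}$ equals $\overline{L\cdot X_\theta}$, the affine cone over the spinor variety $\mathrm{OG}(5,10)\subset\mathbb{P}(\mathfrak{i})$; in particular it is irreducible of dimension $1+\binom{5}{2}=11=\tfrac12\dim\Omin^{E_6}$. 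On the other hand let $p_0:=(e_1,\,e_4^*,\,e_3^*,\,e_2)\in\mathfrak{i}$ be the image of the identity of $SL_4$ under $\iota$, so that $\iota(\overline{SL_4/P^u})=\overline{\varphi_2(\varphi_1(SL_4))\cdot p_0}$; a direct matrix computation gives $\mathrm{Stab}_{SL_4}(p_0)=P^u$, whence this orbit closure is irreducible of dimension $\dim SL_4/P^u=15-4=11$. It therefore suffices to prove $p_0\in\Omin^{E_6}$: then $\varphi_2(\varphi_1(SL_4))\cdot p_0\subseteq\Omin^{E_6}\cap\mathfrak{i}\subseteq\Ominbar^{E_6}\cap\mathfrak{i}$, and since the latter is irreducible of dimension $11$ and $\overline{\varphi_2(\varphi_1(SL_4))\cdot p_0}$ is a closed subvariety of the same dimension contained in it, the two coincide. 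To see $p_0\in\Omin^{E_6}$ one imitates the one-line identity $Ad\big(\exp Y_{-(\alpha_2+\cdots+\alpha_{n-1})}\big)X_\theta=e_1\wedge e_2+e_1\wedge e_n$ from the previous lemma: $X_\theta$ is the highest weight vector of the first $\C^4$-summand of (\ref{idecomp}), and one produces $p_0$ from it by applying a product of root subgroups of $\varphi_2(\SO_{10})$ along the three chains of $D_5$-roots carrying the $\so_{10}$-highest weight to the extreme weights of the other three summands, spawning exactly one term in each.

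The hard part is this last computation. In type $D_n$ it is immediate because $\mathfrak{r}=\Wedge^2\C^{n-1}\oplus\C^{n-1}$ has only two summands, the vector $e_1\wedge e_2+e_1\wedge e_n=e_1\wedge(e_2+e_n)$ is visibly decomposable (hence manifestly in $\Ominbar^{D_n}$), and a single negative-root exponential does the job. For $E_6$ the point $p_0$ has a nonzero component in each of the four $\mathfrak{sl}_4$-isotypic pieces of $\mathfrak{i}$, and passing between the $\C^4$- and the $(\C^4)^*$-summands requires the off-diagonal block $\C^6\otimes\C^4\subset\so_{10}$ of the embedding $\so_6\times\so_4\subset\so_{10}$; so one must choose the root vectors and scalars carefully, verify that the higher-order terms of the exponentials do not interfere, and confirm that the result is indeed $(e_1,e_4^*,e_3^*,e_2)$ under (\ref{idecomp}) and the definition of $\iota$. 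Alternatively one can check, in the model of the half-spin module supplied by (b), that $p_0$ satisfies the pure-spinor quadratic equations — the restriction to $\mathfrak{i}$ of the Joseph relations cutting out $\Ominbar^{E_6}$ — but this is of comparable difficulty. A second point requiring (routine but non-empty) justification is the assertion in (c) that $\Ominbar^{E_6}\cap\mathfrak{i}$ really is the irreducible affine cone over $\mathrm{OG}(5,10)$, i.e. that $\Omin^{E_6}\cap\mathfrak{i}$ consists exactly of the nonzero pure spinors and forms a single $\varphi_2(\mathrm{Spin}_{10})\times\C^{*}$-orbit — the standard behaviour of minimal orbital varieties attached to parabolics with abelian nilradical.
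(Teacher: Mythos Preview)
Your strategy matches the paper's exactly: identify $\mathfrak{i}$ as the half-spin module, decompose it under $\mathfrak{sl}_4$, then show the $SL_4$-orbit through $p_0=(e_1,e_4^*,e_3^*,e_2)$ is dense in the irreducible $11$-dimensional orbital variety $\Ominbar^{E_6}\cap\mathfrak{i}=\overline{\varphi_2(\mathrm{Spin}_{10})\cdot X_\theta}$. For (a) and (b) you argue via the branching $S^+_{10}|_{\so_6\times\so_4}$ and the exceptional isomorphisms, whereas the paper simply writes down four explicit $E_6$-roots and checks that their restrictions to $\mathfrak{h}^{A_3}$ are the four relevant $\mathfrak{sl}_4$-highest weights; your route is more conceptual, the paper's is quicker to verify by hand.

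The one place where your proposal is genuinely incomplete is the step you yourself flag as ``the hard part'': exhibiting $p_0$ as an element of $\Omin^{E_6}$. You anticipate needing three root-group elements (one per extra summand) and worry about cross-terms. In fact the paper dispatches this in one line with only \emph{two} exponentials,
\[
Ad\bigl(\exp(Y_{-(\alpha_2+\alpha_3+\alpha_4+\alpha_5+\alpha_6)})\exp(Y_{-(\alpha_2+\alpha_4+\alpha_5)})\bigr)X_\theta
=e_1\oplus e_4^*\oplus e_3^*\oplus e_2,
\]
the point being that the four nonzero terms $X_\theta$, $Y_{-\beta_2}X_\theta$, $Y_{-\beta_1}X_\theta$, $Y_{-\beta_1}Y_{-\beta_2}X_\theta$ land one in each summand, and all higher iterates vanish. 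So the computation is shorter than you feared; once you have this identity the rest of your argument goes through verbatim. Your second caveat --- that $\Ominbar^{E_6}\cap\mathfrak{i}$ equals the closure of the Levi orbit of $X_\theta$ --- is used by the paper without further comment, just as in the $D_n$ lemma; you are being more scrupulous than the source here.
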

\begin{proof}
    The first statement follows from the fact that the highest root $\theta^{E_6}=\alpha^{E_6}_{1}+2\alpha^{E_6}_{2}+2\alpha^{E_6}_{3}+3\alpha^{E_6}_{4}+2\alpha^{E_6}_{5}+\alpha^{E_6}_{6}$ restricts to the highest weight $\alpha^{D_5}_{1}+2\alpha^{D_5}_{2}+3\alpha^{D_5}_{3}+2\alpha^{D_5}_{4}+2\alpha^{D_5}_{5}$ of of $\varphi_2(\so_{10})$ for the odd spin representation, and the roots
\begin{align*}
\alpha^{E_6}_{1}+2\alpha^{E_6}_{2}+2\alpha^{E_6}_{3}+3\alpha^{E_6}_{4}+2\alpha^{E_6}_{5}+\alpha^{E_6}_{6}\\
\alpha^{E_6}_{1}+\alpha^{E_6}_{2}+2\alpha^{E_6}_{3}+2\alpha^{E_6}_{4}+\alpha^{E_6}_{5}+\alpha^{E_6}_{6}\\
\alpha^{E_6}_{1}+\alpha^{E_6}_{2}+\alpha^{E_6}_{3}+2\alpha^{E_6}_{4}+\alpha^{E_6}_{5}\\
\alpha^{E_6}_{1}+\alpha^{E_6}_{3}+\alpha^{E_6}_{4}
\end{align*}
restricts to highest weights of $\varphi_2(\varphi_1(\mathfrak{sl}_4))$ of the irreps in the (RHS) of (\ref{idecomp}).
    Now for simplicity, we use $\alpha_i$ to denote the simple root $\alpha_i^{E_6}$. 
    Let $X_\theta\neq0$ be a highest root vector of $\mathfrak{e}_6$. Then $\Omin=Ad({E_6})X_\theta$, and $\Ominbar=\Omin\cup\{0\}$.
    Notice that under the identification (\ref{idecomp}) 
    $$
        Ad\big(\exp(Y_{-(\alpha_2+\alpha_3+\alpha_4+\alpha_5+\alpha_6)})\exp(Y_{-(\alpha_2+\alpha_4+\alpha_5)})\big)X_\theta=e_1\oplus e_4^*\oplus e_3^* \oplus e_2.
    $$
    Since the $\varphi_2(\varphi_1(SL_4))$-orbit of $e_1\oplus e_4^*\oplus e_3^* \oplus e_2$ in $\C^4\oplus (\C^4)^*\oplus (\C^4)^* \oplus \C^4$ is contained in the smooth part of orbital variety $\Ominbar^{E_6}\cap\mathfrak{i}=\overline{Ad (\varphi_2(Spin(10)))X_\theta}$, which is irreducible and 
    $$
        \dim \Ominbar^{E_6}\cap\mathfrak{i}=11=\dim(SL_4/P^u),
    $$
    so $\iota\big( \overline{SL_4/P^u}\big)=\Ominbar^{E_6}\cap\mathfrak{i}$.
\end{proof}
\begin{thm}
    The affinization $T^*(SL_4/P^u)^{\mathrm{aff}}$ is isomorphic to the closure $ \Ominbar^{E_6}$ of the minimal nilpotent adjoint orbit in the Lie algebra $\mathfrak{e}_6$.
\end{thm}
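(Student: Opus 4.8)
The plan is to reproduce, for $\mathfrak{e}_6$, the argument behind the second proof of Theorem~\ref{Dmain}: feed the identification of $\Ominbar^{E_6}\cap\mathfrak{i}$ supplied by the Lemma into the Levasseur--Smith--Stafford realization of $\mathcal{U}(\mathfrak{g})/J_0$ as a ring of differential operators, and then pass to associated graded rings. So I would set $X\coloneqq Ad\big(\varphi_2(\mathrm{Spin}_{10})\big)X_\theta\subset\mathfrak{i}$, the orbit of a highest root vector $X_\theta$ of $\mathfrak{e}_6$ under the Levi subgroup $\varphi_2(\mathrm{Spin}_{10})$ --- the orbit whose closure the Lemma identifies with $\iota(\overline{\SL_4/P^u})$. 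Since $X_\theta$ is a highest weight vector for the action of $\varphi_2(\mathrm{Spin}_{10})$ on the half-spin module $\mathfrak{i}\cong\C^{16}$, the variety $X$ is the punctured affine cone over the $10$-dimensional spinor variety: it is smooth and $11$-dimensional, open in its affine closure $\overline{X}$, and $\overline{X}$ is normal --- being the cone over a smooth, projectively normal projective variety --- with smooth locus exactly $X$. Thus $\overline{\SL_4/P^u}$ is, through $\iota$, this same normal cone.

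Next I would invoke \cite{LSS88}. The $\Z$-grading $\mathfrak{e}_6=\mathfrak{i}^-\oplus(\so_{10}\oplus\C)\oplus\mathfrak{i}$ cut out by $\alpha^{E_6}_1$ is a short (three-term) grading whose degree-one component $\mathfrak{i}$ is an irreducible Levi-module with minimal nonzero Levi-orbit equal to $X$. Applying Theorem~5.2 and Corollary~5.3.A of \cite{LSS88} with $\mathfrak{g}=\mathfrak{e}_6$ in place of $\so_{2n}$ should then produce an isomorphism of filtered algebras $\mathcal{U}(\mathfrak{e}_6)/J_0\xrightarrow{\ \sim\ }\mathcal{D}(X)$, where $J_0$ is the Joseph ideal and $\mathcal{D}(X)$ is the ring of global algebraic differential operators on $X$; exactly as in the type-$D$ discussion, this rests on a careful choice of Fourier transform making $\mathfrak{e}_6$ act by first-order differential operators on $\C[X]$. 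Passing to associated graded rings then turns the left-hand side into $\Sym(\mathfrak{e}_6)/\operatorname{gr}(J_0)=\C[\Ominbar^{E_6}]$ --- by Joseph's theorem that $\operatorname{gr}(J_0)$ is the defining ideal of $\Ominbar^{E_6}$ in $\Sym(\mathfrak{e}_6)$ --- and identifies the right-hand side, via the symbol map, with $\C[T^*X]$ (an isomorphism here by \cite{LSS88}). Hence $\C[\Ominbar^{E_6}]\cong\C[T^*X]$.

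It then remains to identify $\C[T^*X]$ with $\C[T^*(\SL_4/P^u)]=\C[T^*(\SL_4/P^u)^{\mathrm{aff}}]$. The quasi-affine homogeneous space $\SL_4/P^u$ embeds as a dense open subvariety of its affinization $\overline{\SL_4/P^u}=\overline{X}$ with complement of codimension $\ge2$ (as is standard for a quotient by the unipotent radical of a parabolic), so it lies inside the smooth locus $X$ of $\overline{X}$, and $X\setminus\iota(\SL_4/P^u)$ is then likewise of codimension $\ge2$ in $X$ --- a short orbit-dimension count of the same nature as in type $D$. Consequently $T^*X$ and $T^*(\SL_4/P^u)$ are smooth varieties agreeing away from a closed subset of codimension $\ge2$, hence have the same ring of global functions. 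Combining this with the previous step yields $\Ominbar^{E_6}\cong T^*(\SL_4/P^u)^{\mathrm{aff}}$ as affine varieties; and since the isomorphism is the classical limit of an isomorphism of the quantizations $\mathcal{U}(\mathfrak{e}_6)/J_0$ and $\mathcal{D}(\SL_4/P^u)$, it automatically intertwines the Kirillov--Kostant Poisson structure on $\Ominbar^{E_6}$ with the canonical one on $T^*(\SL_4/P^u)$.

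The step I expect to demand the most care is the application of \cite{LSS88} to this particular short grading of $\mathfrak{e}_6$ together with the accompanying Fourier transform: one has to realize $\mathfrak{e}_6$ inside $\mathcal{D}(X)$ so that $\mathcal{U}(\mathfrak{e}_6)/J_0\cong\mathcal{D}(X)$ becomes an isomorphism of \emph{filtered} algebras, since it is precisely this --- through the induced identification $\operatorname{gr}\mathcal{D}(X)=\C[T^*X]=\C[\Ominbar^{E_6}]$ --- that drives the whole argument. By contrast the codimension estimate, though it also requires a short orbit computation, should be routine.
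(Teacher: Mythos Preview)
Your proposal is correct and follows essentially the same route as the paper: set $X=Ad(\varphi_2(\mathrm{Spin}_{10}))X_\theta$, apply Theorem~5.2 and Corollary~5.3.A of \cite{LSS88} to get $\mathcal{U}(\mathfrak{e}_6)/J_0\cong\mathcal{D}(X)$, use the Lemma together with normality of $\overline{X}$ and codimension~$\ge 2$ to identify $\mathcal{D}(X)=\mathcal{D}(\SL_4/P^u)$, and pass to associated graded.

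The one place the paper proceeds differently is exactly the filtration step you flag as delicate. You want the Levasseur--Smith--Stafford isomorphism to be filtered for the PBW filtration on the left and the order filtration on the right---equivalently, every element of $\mathfrak{e}_6$, including those in $\mathfrak{i}$ after Fourier transform, to act by an operator of order~$\le 1$---and then to invoke that $\mathrm{gr}(J_0)$ is already the prime ideal of $\Ominbar^{E_6}$. The paper does not assert this. Instead it pulls the order filtration back along the isomorphism $\overline\psi$ and argues, via Remarks~3.4(a),(c) of \cite{LSS88} (that $\mathfrak{r}^-$ surjects onto the multiplication operators $\C[X]$ and $\varphi_2(\so_{10})$ acts by derivations, so together they generate $\mathcal{D}(X)$), that this pulled-back filtration is \emph{good}; hence $\Spec$ of its associated graded equals the associated variety $\Ominbar^{E_6}$ of $J_0$, independently of what order the images of elements of $\mathfrak{i}$ actually have. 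This buys robustness---no need to check that $\mathfrak{i}$ lands in first order---at the cost of trading your sharper ``$\mathrm{gr}(J_0)$ is prime'' input for the softer invariance of the associated variety under change of good filtration.
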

\begin{proof}
    Let $X\coloneqq Ad (\varphi_2(Spin(10)))X_\theta$.
    Let $J_0$ be the Joseph ideal \cite{Jos76} of $\mathcal{U}(\mathfrak{e}_6)$. By Theorem 5.2 and Corollary 5.3.A in \cite{LSS88} we have an isomorphism from $\mathcal{U}\mathfrak{e}_6/J_0$ to the ring of algebraic differential operators on $X$ 
    $$
        \overline\psi:\mathcal{U}(\mathfrak{e}_6)/J_0\xrightarrow{\cong}\mathcal{D}({X}).
    $$
    Let $\pi:\mathcal{U}(\mathfrak{e}_6)\rightarrow\mathcal{U}(\mathfrak{e}_6)/J_0$ be the quotient map and define the surjective ring homomorphism $$\psi\coloneqq \overline{\psi}\circ\pi:\mathcal{U}(\mathfrak{e}_6)\twoheadrightarrow\mathcal{D}({X}).$$
    Let $F^i(\mathcal{U}(\mathfrak{e}_6))$ be the pull back of the order filtration on $\mathcal{D}(X)$ under  $\psi$. 
    By Remark 3.4 (c) in \cite{LSS88}, for all $p\in X$, we have $\mathcal{O}_{X,p}$ and $\psi(\mathcal{U}(\varphi_2(\so_{10})))$ generate $\mathcal{D}_{X,p}$, and in our case this is due to $\varphi_2(\mathfrak{so}_{10})$ acts on $\mathcal{O}_X$ as derivations. Also by Remark 3.4 (a) in \cite{LSS88}, if we take $\mathfrak{r}^-$ as the $\mathrm{ad}\,\mathfrak{h}$-subrep of $\mathfrak{e}_6$ so that $$\mathfrak{e}_6=\mathfrak{p_1}\oplus\mathfrak{r}^-=\mathfrak{i}\oplus(\varphi_2(\so_{10})\oplus\C)\oplus \mathfrak{r}^-,$$ then the multiplication operators $\C[X]\subset \mathcal{D}(X)$ are generated by $\psi(\mathfrak{r}^-)$, and in our case since $\mathfrak{r}^-$ is abelian, indeed $\mathcal{U}(\mathfrak{r}^-)=\Sym(\mathfrak{r}^-)=\C[\mathfrak{i}]\twoheadrightarrow \C[X]$.
    So $\mathrm{gr}_{F^i}(\mathcal{U}(\mathfrak{e}_6))$ is finitely generated over $\mathrm{Sym}(\mathfrak{e}_6)$, that is
    $F^i$ is a good filtration. So $
        \Spec(\mathrm{gr}_{F^i}\,(\mathcal{U}(\mathfrak{e}_6)/J_0)) 
        $ is isomorphic to $\Ominbar^{E_6}$,
    the associated variety of the Joseph ideal.
    
     Since $\iota\big(\overline{SL_4/P^u}\big)=\overline{X}$ is normal, so we have the codimension of the complement of $SL_4/P^u$ is at least $2$ in $\overline{SL_4/P^u}$. So $\mathcal{D}({SL_4/P^u})=\mathcal{D}(X).$ Now with respect to the order filtration
    $$
\Spec(\mathrm{gr}\,\mathcal{D}(X))=\Spec(\mathrm{gr}\,\mathcal{D}(SL_4/P^u))
        =\Spec(\C[T^*(SL_4/P^u)])
        = T^*(SL_4/P^u)^{\mathrm{aff}},
    $$
    as desired.
    \end{proof}
    \begin{Corollary}
        The affinization $T^*(SL_4/P^u)^{\mathrm{aff}}$ has symplectic singularities in the sense of Beauville \cite{Bea00}.
    \end{Corollary}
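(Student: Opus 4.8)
The plan is to read the statement off from the Theorem. By the Theorem there is an isomorphism of affine varieties $T^*(SL_4/P^u)^{\mathrm{aff}}\cong\Ominbar^{E_6}$, so it suffices to verify that the closure of the minimal nilpotent orbit of $\mathfrak{e}_6$ is a symplectic singularity in the sense of Beauville, i.e.\ that $\Ominbar^{E_6}$ is normal, that its smooth locus carries a holomorphic symplectic form $\omega$, and that for one (equivalently, every) resolution of singularities $\pi\colon\widetilde Z\to\Ominbar^{E_6}$ the pullback $\pi^*\omega$ extends to a regular $2$-form on $\widetilde Z$.

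The first two points are classical structure theory of minimal orbits. Normality of $\Ominbar^{E_6}$ is a special case of the normality of minimal nilpotent orbit closures, and is consistent with the normality of $\overline{SL_4/P^u}$ used in the proof of the Theorem. Its smooth locus is exactly $\Omin^{E_6}$: the only point of $\Ominbar^{E_6}\setminus\Omin^{E_6}$ is $0$, and since $\Ominbar^{E_6}$ is the cone over a positive-dimensional projective variety the vertex $0$ is singular. Finally $\Omin^{E_6}$ carries the Kirillov--Kostant--Souriau symplectic form, the restriction of the Lie--Poisson structure on $\mathfrak{e}_6$; it is $E_6$-invariant and homogeneous of weight $1$ for the scaling action.

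For the extension property I would use that $\Ominbar^{E_6}$ is the affine cone over the closed $E_6$-orbit $Y\subset\mathbb{P}(\mathfrak{e}_6)$, which is the projectivization $\mathbb{P}(\Omin^{E_6})$, a partial flag variety of $E_6$; one then takes $\pi$ to be the blow-up of the cone point, that is, the total space of $\mathcal{O}_Y(-1)$, which is smooth with exceptional divisor the zero section $Y$, and checks that $\pi^*\omega$ has no pole along $Y$. This is the step with genuine content, but it is exactly the standard verification that a normal nilpotent orbit closure is a (conical) symplectic singularity, for which I would cite \cite{Bea00} rather than reproduce it. Note that Beauville's definition requires $\pi^*\omega$ only to be regular, not non-degenerate — which is what one uses here, since $\Ominbar^{E_6}$ has no symplectic resolution.

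I do not expect a real obstacle. The one point of principle is that being a symplectic singularity is intrinsic to a normal variety, so it is legitimate to verify it on $\Ominbar^{E_6}$ using the KKS form rather than on $T^*(SL_4/P^u)$ using its Liouville form; the two in fact agree up to a scalar, because the isomorphism of the Theorem arises as an associated-graded map and is therefore $\mathbb{G}_m$-equivariant, but this is not needed for the statement.
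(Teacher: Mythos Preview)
Your approach is correct and is exactly what the paper does: the Corollary is stated without proof, being an immediate consequence of the Theorem together with Beauville's result \cite{Bea00} that closures of nilpotent orbits (in particular $\Ominbar^{E_6}$) are symplectic singularities. Your write-up simply unpacks that citation in more detail than the paper bothers to.
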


\begin{Conjecture}
    The closure $\Ominbar^{E_6}$ is not isomorphic to any Hamiltonian reduction of the cotangent space $T^*V$ of a representation $V$ by a reductive complex algebraic group $G$.
\end{Conjecture}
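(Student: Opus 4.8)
We sketch a possible line of attack, flagging where the real difficulty lies. The plan is to argue by contradiction and to locate the obstruction in the representation theory of $\mathfrak{e}_6$: suppose $\Ominbar^{E_6}\cong\mathcal{M}:=\mu^{-1}(0)/\!/G$ for some reductive complex algebraic group $G$ acting linearly on a finite dimensional vector space $V$, with $\mu\colon T^*V\to\mathfrak{g}^*$ the moment map. Two intrinsic features of $\Ominbar^{E_6}$ will drive the argument. First, it is a conical symplectic singularity (by the Corollary above) which is moreover \emph{rigid}: the minimal nilpotent orbit of $\mathfrak{e}_6$ is not induced from any proper Levi — the smallest induced orbit, the Richardson orbit of the maximal parabolic with $D_5$-Levi, already has dimension $32>22=\dim\mathcal{O}_{\min}^{E_6}$ — so $\Ominbar^{E_6}$ has no nontrivial graded Poisson deformation. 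Second, $E_6$ acts Hamiltonianly on $\Ominbar^{E_6}$ with moment map equal, up to scalar, to the tautological closed embedding $\Ominbar^{E_6}\hookrightarrow\mathfrak{e}_6^*$; this symmetry is visible already on the graded Poisson algebra $\C[\Ominbar^{E_6}]\cong\bigoplus_{k\ge 0}V_{k\theta}$ (the Cartan-component algebra, by the structure of the Joseph ideal), whose degree-one part is $\mathfrak{e}_6^*$ with Poisson bracket the Lie bracket, and hence is independent of any presentation.

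First I would simplify the gauge data. On the Higgs side the family $\mu^{-1}(\zeta)/\!/G$, $\zeta\in(\mathfrak{g}^*)^{G}$, is a graded Poisson deformation of $\mathcal{M}$; rigidity forces it to be trivial, and a standard argument (inspecting regular values of $\mu$, or using the variation-of-GIT picture together with Fu's theorem that $\Ominbar^{E_6}$ admits no proper symplectic partial resolution) then yields $(\mathfrak{g}^*)^{G}=0$, i.e. $G^{\circ}$ semisimple, so that $\dim V=11+\dim G$. Next I would transport the $E_6$-symmetry onto $V$: a connected group with Lie algebra $\mathfrak{e}_6$ acting on $\mathcal{M}$ by Poisson automorphisms and commuting with the conical $\C^{\times}$ is — I claim — induced from a linear action of $\mathfrak{e}_6$ on $V$ commuting with $G$. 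Granting this, $V$ becomes a representation of $\mathfrak{e}_6\times\mathfrak{g}$ (up to a central quotient), the $\mathfrak{e}_6$-moment map $\mu_{\mathfrak{e}_6}\colon T^*V\to\mathfrak{e}_6^*$ is $G$-invariant and descends to the embedding $\mathcal{M}\hookrightarrow\mathfrak{e}_6^*$, and therefore $\overline{\mu_{\mathfrak{e}_6}\bigl(\mu_G^{-1}(0)\bigr)}=\Ominbar^{E_6}$ has dimension $22$.

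The decisive step is then a dimension estimate on $V$. Since $\mathfrak{e}_6$ has no nonzero representation of dimension below $27$, a nontrivial $\mathfrak{e}_6$-action on $V$ forces $T^*V$ to contain a summand isomorphic to $\C^{27}$ or its dual. For $W=\C^{27}$ the moment map $\mu_{\mathfrak{e}_6}\colon T^*W\to\mathfrak{e}_6^*$ already has image of dimension at least $26$: using that the generic $E_6$-orbit on $\C^{27}$ has dimension $26$ (stabilizer $F_4$) while these orbits fill a $27$-dimensional open set, one finds $\dim\mu_{\mathfrak{e}_6}^{-1}(0)=28$, whence $\dim\overline{\mu_{\mathfrak{e}_6}(T^*W)}\ge 54-28=26>22$, and adjoining further $\mathfrak{e}_6$-summands only enlarges the image. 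To finish, one must pass from $T^*V$ to the subvariety $\mu_G^{-1}(0)$ (of dimension $22+\dim G$, over a generic point of $\Ominbar^{E_6}$ with $\mu_{\mathfrak{e}_6}$-fibre a single free $G$-orbit of dimension $\dim G$): a case analysis on the $\mathfrak{e}_6$-isotypic type of $V$, together with the dimension gaps among connected subgroups of the general linear groups of the multiplicity spaces, should rule out every pair $(G,V)$ and give the contradiction.

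The main obstacle is precisely this last step: there is no a priori bound on $\dim G$, hence none on $\dim V$, so the contradiction cannot come from a finite computation but requires a uniform input — in effect, that no faithful representation of $\mathfrak{e}_6$ is "small enough" for the moment map of its cotangent bundle, restricted to the zero locus of any semisimple-group moment map, to have image equal to $\Ominbar^{E_6}$. A secondary difficulty is to justify that the $\mathfrak{e}_6$-symmetry is classical, i.e. genuinely induced from an action on $V$ rather than emergent on $\mathcal{M}$; I would address this by working throughout with the graded Poisson algebra $\C[\Ominbar^{E_6}]$ and its presentation by the quadratic Joseph relations, rather than with the Lagrangian pair $(G,V)$ directly. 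An alternative route would compare the Hilbert series $\sum_{k\ge 0}(\dim V_{k\theta})\,t^{k}$ of $\C[\Ominbar^{E_6}]$ with the Molien-type integral computing the Hilbert series of $\C[\mu^{-1}(0)/\!/G]$, but converting such a comparison into a clean impossibility statement seems to require the same control over $(G,V)$.
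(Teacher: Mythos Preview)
This statement is presented in the paper as a \emph{Conjecture}, with no proof or sketch; there is nothing to compare your attempt against. Your outline is an honest strategy and you correctly flag its weak points, but the central one---lifting the Hamiltonian $\mathfrak{e}_6$-action on $\mathcal{M}$ to a linear action on $V$ commuting with $G$---is not a technicality. The paper's own type-$D$ discussion already shows this form of the claim can fail: there $\Ominbar^{D_n}\cong\mu_{SL_2}^{-1}(0)/\!/SL_2$ with $V=\Hom(\C,\C^2)\oplus\Hom(\C^2,\C^{n-1})$, yet $\so_{2n}$ does \emph{not} act linearly on $V$ commuting with $SL_2$, since as an $SL_2$-module $V\cong(\C^2)^{\oplus n}$ and the commutant of $SL_2$ in $\GL(V)$ is only $\GL_n$, too small to contain $\so_{2n}$. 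What holds instead is that $\so_{2n}$ acts \emph{symplectically} on $T^{*}V\cong\C^{2}\otimes\C^{2n}$ via the Howe pair $(Sp_2,O_{2n})$, without preserving the Lagrangian $V$. So your lifting claim must at minimum be relaxed to a symplectic action on $T^{*}V$, and even that weaker statement is not a general theorem---it is a special feature of dual pairs, and your proposed route through the graded Poisson algebra does not supply it, since the copy of $\mathfrak{e}_6^{*}$ in degree one of $\C[\mathcal{M}]$ could arise from $G$-invariants in $V\otimes V^{*}$ rather than from $V\oplus V^{*}$ itself.

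Even granting a symplectic lift, the endgame you describe is an unbounded case analysis over semisimple $G$ with $\dim V=\dim G+11$, and your moment-map image estimate for $T^{*}\C^{27}$ does not obviously survive restriction to $\mu_G^{-1}(0)$ once $G$ is large. Your sketch is thus a reasonable heuristic for why the conjecture might hold, but not a proof---consistent with its status in the paper as open.
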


By a similar argument one may also show that
\begin{Theorem}
    The affinization of the total space of the cotangent bundle of the $E_6$ orbit of a highest weight vector in the standard $27$-dimensional irreducible representation of $E_6$ is isomorphic to the closure of the minimal nilpotent orbit of $E_7$.
\end{Theorem}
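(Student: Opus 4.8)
The plan is to transcribe the argument of the previous theorem, using this time a single embedding $\varphi\colon\mathfrak{e}_6\hookrightarrow\mathfrak{e}_7$ that realizes $\mathfrak{e}_6$ as the semisimple part of the Levi of the maximal parabolic $\Liep_7\subset\mathfrak{e}_7$ attached to the end node $\alpha_7^{E_7}$ of the long branch (Bourbaki labeling), normalized so that $(\varphi|_{\mathfrak{h}^{E_6}})^*$ sends $\alpha_i^{E_7}\mapsto\alpha_i^{E_6}$ for $i\leq 6$ and $\alpha_7^{E_7}\mapsto 0$. Let $\mathfrak{m}=\bigoplus_{\alpha\geq\alpha_7}\Lieg_\alpha$ be the nilradical of $\Liep_7$. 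Since $\omega_7^{E_7}$ is minuscule, the grading of $\mathfrak{e}_7$ by the $\alpha_7$-coefficient is a short grading $\mathfrak{e}_7=\Lieg_{-1}\oplus\Lieg_0\oplus\Lieg_1$ with $\Lieg_{\pm1}$ abelian of dimension $27$ and $\Lieg_0=\varphi(\mathfrak{e}_6)\oplus\C$, and the highest root $\theta^{E_7}=2\alpha_1+2\alpha_2+3\alpha_3+4\alpha_4+3\alpha_5+2\alpha_6+\alpha_7$ restricts on $\varphi(\mathfrak{e}_6)$ to the highest weight of the standard $27$-dimensional representation $V$. Hence $\mathfrak{m}=\Lieg_1\cong V$ as $\varphi(\mathfrak{e}_6)$-modules, and a highest root vector $X_\theta$ of $\mathfrak{e}_7$ lies in $\mathfrak{m}$ and, being annihilated by all positive simple root vectors of $\varphi(\mathfrak{e}_6)$, is a highest weight vector $v\in V$.

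The first step would be the lemma analogous to the two above: $\Ominbar^{E_7}\cap\mathfrak{m}=\overline{E_6\cdot v}$, the affine cone over the closed $E_6$-orbit $\mathbb{P}(E_6\cdot v)\subset\mathbb{P}(V)\cong\mathbb{P}^{26}$ (the Cayley plane). The inclusion $\supseteq$ is immediate since $\overline{E_6\cdot v}=\overline{\mathrm{Ad}(\varphi(E_6))X_\theta}=(E_6\cdot v)\cup\{0\}$ is a closed $\varphi(E_6)$-stable cone in $\mathfrak{m}$ contained in $\Ominbar^{E_7}$. For the reverse inclusion I would either invoke the exceptional Jordan algebra structure on $\Lieg_1\cong J_3(\mathbb{O})$ (on which $\Lieg_0=\varphi(\mathfrak{e}_6)\oplus\C$ acts as the structure algebra, with exactly four orbits indexed by Jordan rank $0,1,2,3$, and the rank-one orbit equal to $\Omin^{E_7}\cap\Lieg_1$), or argue as in the $E_6$ case from the dimension count $\dim(E_6\cdot v)=16+1=17=\tfrac12(2h^\vee-2)=\tfrac12\dim\Omin^{E_7}=\dim(\Ominbar^{E_7}\cap\mathfrak{m})$ together with the irreducibility of $\overline{\mathrm{Ad}(\varphi(E_6))X_\theta}$. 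I would also record that $E_6\cdot v$ is quasi-affine and that its affine closure $\overline{E_6\cdot v}$ is normal, being the cone over a minuscule flag variety in its minimal, projectively normal embedding.

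With $X\coloneqq\mathrm{Ad}(\varphi(E_6))X_\theta=E_6\cdot v$ in hand, the rest copies the proof of the $E_6$ theorem almost verbatim. By Theorem 5.2 and Corollary 5.3.A of \cite{LSS88}, applied to $\mathfrak{e}_7$, there is an isomorphism of filtered algebras $\overline{\psi}\colon\mathcal{U}(\mathfrak{e}_7)/J_0\xrightarrow{\cong}\mathcal{D}(X)$ with $J_0$ the Joseph ideal \cite{Jos76}; one pulls back the order filtration on $\mathcal{D}(X)$ and checks, using Remark 3.4(a),(c) of \cite{LSS88} and the decomposition $\mathfrak{e}_7=\Liep_7\oplus\mathfrak{r}^-=\mathfrak{m}\oplus(\varphi(\mathfrak{e}_6)\oplus\C)\oplus\mathfrak{r}^-$ — in which $\mathfrak{r}^-=\Lieg_{-1}$ is abelian, so $\Sym(\mathfrak{r}^-)=\C[\mathfrak{m}]\twoheadrightarrow\C[X]$ supplies the multiplication operators while $\varphi(\mathfrak{e}_6)$ acts on $\C[X]$ by derivations — that the filtration is good, whence $\Spec\mathrm{gr}(\mathcal{U}(\mathfrak{e}_7)/J_0)=\Ominbar^{E_7}$, the associated variety of $J_0$. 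On the other hand, normality of $\overline{E_6\cdot v}$ and the codimension-$17$ complement $\{0\}$ give $\mathcal{D}(E_6\cdot v)=\mathcal{D}(X)$ and, with respect to the order filtration, $\Spec\mathrm{gr}\,\mathcal{D}(E_6\cdot v)=\Spec\C[T^*(E_6\cdot v)]=T^*(E_6\cdot v)^{\mathrm{aff}}$; comparing the two computations of $\mathrm{gr}\,\mathcal{D}(X)$ yields the isomorphism. As in the $D_n$ and $E_6$ cases, one must insert the correct Fourier transforms so that $\mathfrak{e}_7$ genuinely acts by algebraic differential operators on $X$, with $\Lieg_{-1}$ acting by multiplication and $\Lieg_{1}$ by operators of order $\leq2$.

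The main obstacle is the first step. In the $E_6$ case $\mathfrak{i}=\C^4\oplus(\C^4)^*\oplus(\C^4)^*\oplus\C^4$ is visibly reducible over $\mathfrak{sl}_4$, which lets one exhibit by hand a product of one-parameter subgroups moving $X_\theta$ into the dense locus and read off the orbit structure directly; here $\mathfrak{m}\cong V$ is already $\varphi(\mathfrak{e}_6)$-irreducible, so the identification $\Ominbar^{E_7}\cap\mathfrak{m}=\overline{E_6\cdot v}$ must be obtained structurally, either from the Jordan-theoretic classification of $E_6$-orbits on the Albert algebra (rank-one $=\Omin^{E_7}\cap\mathfrak{m}$) or from the dimension-and-irreducibility argument above — together with the inputs, classical but worth citing carefully, that $\Ominbar^{E_7}$ is normal and that the minuscule cone $\overline{E_6\cdot v}$ is normal. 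Once the lemma is in place, everything else is routine.
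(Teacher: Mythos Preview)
Your proposal is correct and is exactly the ``similar argument'' the paper gestures at; the paper itself gives no further proof for the $E_7$ statement.

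One remark on your closing paragraph: the ``main obstacle'' you flag is actually \emph{easier} here than in the $E_6$ case, not harder. In the $E_6$ section the two-step chain $\mathfrak{sl}_4\hookrightarrow\mathfrak{so}_{10}\hookrightarrow\mathfrak{e}_6$ and the explicit one-parameter subgroups are used only to identify the Levi-orbit closure $\overline{Ad(\varphi_2(Spin(10)))X_\theta}$ with the extrinsically defined $\iota(\overline{SL_4/P^u})$; the equality $\Ominbar^{E_6}\cap\mathfrak{i}=\overline{Ad(\varphi_2(Spin(10)))X_\theta}$ itself is simply asserted (irreducible orbital variety plus matching dimension). For $E_7$ there is no second identification to make: the theorem is phrased directly in terms of $E_6\cdot v=Ad(\varphi(E_6))X_\theta$, so the lemma collapses to precisely the dimension-and-irreducibility argument you already sketch, with $\overline{E_6\cdot v}=(E_6\cdot v)\cup\{0\}$ giving normality and codimension-$17$ complement for free. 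The Jordan-algebra rank description is a pleasant confirmation but is not needed.
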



\printbibliography

@Article{Bea00,
 Author = {Beauville, Arnaud},
 Title = {Symplectic singularities},
 FJournal = {Inventiones Mathematicae},
 Journal = {Invent. Math.},
 ISSN = {0020-9910},
 Volume = {139},
 Number = {3},
 Pages = {541--549},
 Year = {2000},
 DOI = {10.1007/s002229900043},
 Keywords = {14B05,32S45,14E15,53C26},
 zbMATH = {1443587},
 Zbl = {0958.14001}
}

@Article{LSS88,
 Author = {Levasseur, T. and Smith, S. P. and Stafford, J. T.},
 Title = {The minimal nilpotent orbit, the {Joseph} ideal, and differential operators},
 FJournal = {Journal of Algebra},
 Journal = {J. Algebra},
 ISSN = {0021-8693},
 Volume = {116},
 Number = {2},
 Pages = {480--501},
 Year = {1988},
 Language = {English},
 DOI = {10.1016/0021-8693(88)90231-1},
 Keywords = {17B20,17B35,16P40,16Dxx,16D30},
 zbMATH = {4071145},
 Zbl = {0656.17009}
}

@Article{Jos76,
 Author = {Joseph, A.},
 Title = {The minimal orbit in a simple {Lie} algebra and its associated maximal ideal},
 FJournal = {Annales Scientifiques de l'{\'E}cole Normale Sup{\'e}rieure. Quatri{\`e}me S{\'e}rie},
 Journal = {Ann. Sci. {\'E}c. Norm. Sup{\'e}r. (4)},
 ISSN = {0012-9593},
 Volume = {9},
 Pages = {1--29},
 Year = {1976},
 Language = {English},
 DOI = {10.24033/asens.1302},
 Keywords = {17B10,17B20},
 URL = {https://eudml.org/doc/81975},
 zbMATH = {3539383},
 Zbl = {0346.17008}
}

@Article{DKS13,
 Author = {Dancer, Andrew and Kirwan, Frances and Swann, Andrew},
 Title = {Implosion for hyperk{\"a}hler manifolds},
 FJournal = {Compositio Mathematica},
 Journal = {Compos. Math.},
 ISSN = {0010-437X},
 Volume = {149},
 Number = {9},
 Pages = {1592--1630},
 Year = {2013},
 Language = {English},
 DOI = {10.1112/S0010437X13007203},
 Keywords = {53C26,53D20,14L24},
 zbMATH = {6255397},
 Zbl = {1286.53056}
}

@Misc{Jia21,
 Author = {Jia, Boming},
 Title = {The {Affine} {Closure} of ${T}^*({SL_n}/{U})$},
 Year = {2021},
 HowPublished = {Preprint, {arXiv}:2112.08649 [math.{RT}] (2021)},
 Keywords = {20G05},
 URL = {https://arxiv.org/abs/2112.08649},
 arXiv = {arXiv:2112.08649}
}

@Misc{GW25,
 Author = {Gannon, Tom and Webster, Ben},
 Title = {Functoriality of Coulomb Branches},
 Year = {2025},
 HowPublished = {(to be posted in arxiv)},
 Keywords = {},
}
\bigskip
\footnotesize
 \noindent Boming Jia, \textit{Email}: \texttt{jiabm@tsinghua.edu.cn}\\
\textsc{Yau Mathematical Sciences Center,\\
 Jingzhai 301, Tsinghua University,\\
 Beijing, 100084, China.}\par\nopagebreak
 
\end{document}